\newcommand{\Spvek}[2][r]{%
  \gdef\@VORNE{1}
  \left(\hskip-\arraycolsep%
    \begin{array}{#1}\vekSp@lten{#2}\end{array}%
  \hskip-\arraycolsep\right)}
\def\vekSp@lten#1{\xvekSp@lten#1;vekL@stLine;}
\def\vekL@stLine{vekL@stLine}
\def\xvekSp@lten#1;{\def\temp{#1}%
  \ifx\temp\vekL@stLine
  \else
    \ifnum\@VORNE=1\gdef\@VORNE{0}
    \else\@arraycr\fi%
    #1%
    \expandafter\xvekSp@lten
  \fi}
\newtheorem{thm}{Theorem}[section]
\newtheorem{lem}[thm]{Lemma}
\newtheorem{exa}[thm]{Example}
\newtheorem{rem}[thm]{Remark}
\theoremstyle{definition}
\newcommand{\scr}[1]{\mathscr #1}
\definecolor{wco}{rgb}{0.5,0.2,0.3}
\newcommand{\I}{\mathds{1}}
\numberwithin{equation}{section} \theoremstyle{remark}
\newcommand{\ua}{\uparrow}
\title{{\bf   Well-Posedness for McKean-Vlasov SDEs Driven by Multiplicative Stable Noises}
%\footnote{Supported in
% part by  NNSFC(11431014)}
}
\author{
{\bf     Chang-Song Deng $^{a)}$, Xing Huang $^{b)}$,    }\\
\footnotesize{  a)School of Mathematics and Statistics, Wuhan University, Wuhan 430072, China}\\
\footnotesize{ dengcs@whu.edu.cn }\\
\footnotesize{  b)Center for Applied Mathematics, Tianjin University, Tianjin 300072, China}\\
\footnotesize{  xinghuang@tju.edu.cn}}
\begin{document}
\allowdisplaybreaks
\def\R{\mathbb R}  \def\ff{\frac} \def\ss{\sqrt} \def\B{\mathbf
B} \def\W{\mathbb W}
\def\N{\mathbb N} \def\kk{\kappa} \def\m{{\bf m}}
\def\ee{\varepsilon}\def\ddd{D^*}
\def\dd{\delta} \def\DD{\Delta} \def\vv{\varepsilon} \def\rr{\rho}
\def\<{\langle} \def\>{\rangle} \def\GG{\Gamma} \def\gg{\gamma}
  \def\nn{\nabla} \def\pp{\partial} \def\E{\mathbb E}
\def\d{\text{\rm{d}}} \def\bb{\beta} \def\aa{\alpha} \def\D{\scr D}
  \def\si{\sigma} \def\ess{\text{\rm{ess}}}
\def\beg{\begin} \def\beq{\begin{equation}}  \def\F{\scr F}
\def\Ric{\text{\rm{Ric}}} \def\Hess{\text{\rm{Hess}}}
\def\e{\text{\rm{e}}}
\def\iup{\text{\rm{i}}}
\def\ua{\underline a} \def\OO{\Omega}  \def\oo{\omega}
 \def\tt{\tilde} \def\Ric{\text{\rm{Ric}}}
\def\cut{\text{\rm{cut}}} \def\P{\mathbb P} \def\ifn{I_n(f^{\bigotimes n})}
\def\C{\scr C}      \def\aaa{\mathbf{r}}     \def\r{r}
\def\gap{\text{\rm{gap}}} \def\prr{\pi_{{\bf m},\varrho}}  \def\r{\mathbf r}
\def\Z{\mathbb Z} \def\vrr{\varrho} \def\ll{\lambda}
\def\L{\scr L}\def\Tt{\tt} \def\TT{\tt}
\def\i{{\rm in}}\def\Sect{{\rm Sect}}  \def\H{\mathbb H}
\def\M{\scr M}\def\Q{\mathbb Q} \def\texto{\text{o}} \def\LL{\Lambda}
\def\Rank{{\rm Rank}} \def\B{\scr B} \def\i{{\rm i}} \def\HR{\hat{\R}^d}
\def\to{\rightarrow}\def\l{\ell}\def\iint{\int}
\def\EE{\scr E}\def\Cut{{\rm Cut}}
\def\A{\scr A} \def\Lip{{\rm Lip}}
\def\BB{\scr B}\def\Ent{{\rm Ent}}\def\L{\scr L}
\def\R{\mathbb R}  \def\ff{\frac} \def\ss{\sqrt} \def\B{\mathbf
B}
\def\N{\mathbb N} \def\kk{\kappa} \def\m{{\bf m}}
\def\dd{\delta} \def\DD{\Delta} \def\vv{\varepsilon} \def\rr{\rho}
\def\<{\langle} \def\>{\rangle} \def\GG{\Gamma} \def\gg{\gamma}
  \def\nn{\nabla} \def\pp{\partial} \def\E{\mathbb E}
\def\d{\text{\rm{d}}} \def\bb{\beta} \def\aa{\alpha} \def\D{\scr D}
  \def\si{\sigma} \def\ess{\text{\rm{ess}}}
\def\beg{\begin} \def\beq{\begin{equation}}  \def\F{\scr F}
\def\Ric{\text{\rm{Ric}}} \def\Hess{\text{\rm{Hess}}}
\def\ua{\underline a} \def\OO{\Omega}  \def\oo{\omega}
 \def\tt{\tilde} \def\Ric{\text{\rm{Ric}}}
\def\cut{\text{\rm{cut}}} \def\P{\mathbb P} \def\ifn{I_n(f^{\bigotimes n})}
\def\C{\scr C}      \def\aaa{\mathbf{r}}     \def\r{r}
\def\gap{\text{\rm{gap}}} \def\prr{\pi_{{\bf m},\varrho}}  \def\r{\mathbf r}
\def\Z{\mathbb Z} \def\vrr{\varrho} \def\ll{\lambda}
\def\L{\scr L}\def\Tt{\tt} \def\TT{\tt}\def\II{\mathbb I}
\def\i{{\rm in}}\def\Sect{{\rm Sect}}  \def\H{\mathbb H}
\def\M{\scr M}\def\Q{\mathbb Q} \def\texto{\text{o}} \def\LL{\Lambda}
\def\Rank{{\rm Rank}} \def\B{\scr B} \def\i{{\rm i}} \def\HR{\hat{\R}^d}
\def\to{\rightarrow}\def\l{\ell}
\def\8{\infty}\def\U{\scr U}

\maketitle

\begin{abstract}
We establish the well-posedness for a class of McKean-Vlasov SDEs driven by symmetric
$\alpha$-stable L\'{e}vy process ($1/2<\alpha\leq1$), where the drift coefficient is H\"{o}lder
continuous in space variable, while the noise coefficient is Lipscitz continuous in space variable,
and both of them satisfy the Lipschitz condition in distribution variable with respect to
Wasserstein distance. If the drift coefficient does not depend on distribution variable,
our methodology developed in this paper applies to the case $\alpha\in(0,1]$. The main tool
relies on heat kernel estimates for (distribution independent) stable SDEs and
Banach's fixed point theorem.
\end{abstract} \noindent
 AMS subject Classification: 60G52, 60H10.  \\
\noindent
 Keywords: McKean-Vlasov SDEs, $\alpha$-stable process, distribution dependent noise, Wasserstein distance.

 \vskip 2cm

\section{Introduction}

It is well-known that many complex physical, biological, and other scientific phenomena can be
modeled by interacting particle systems, which attract much attention in recent years due to their
importance both in theory and in applications. When the number of particles goes to infinity,
the equation for one single particle in the mean field interacting particle system tends to the so-called McKean-Vlasov SDE,
which was first introduced by McKean in \cite{McKean}. This is related to the propagation of chaos, see for instance \cite{SZ}.
As a fundamental issue in the study of McKean-Vlasov SDEs, the well-posedness has been intensively investigated
for Gaussian noise case, see \cite{CF,HW22,23R,RZ, W21a,ZG} and references therein for more details.

As far as we know, however, the results concerning the well-posedness for McKean-Vlasov SDEs with jump noises are
still quite limited. The well-posedness is established in \cite{JMW} for L\'{e}vy-driven McKean-Vlasov
SDEs without drift. In \cite{HaW}, the authors consider strong well-posedness for density dependent SDEs with additive $\alpha$-stable noise ($1<\alpha<2$), where the drift is assumed to be $C_b^\beta$ with $\beta\in(1-\alpha/2,1)$ in space variable, and Lipschitz continuous in distribution variable with
 respect to the $L^\theta$-Wasserstein distance ($1<\theta<\alpha$).
 In \cite{NVS}, the authors prove the well-posedness for stable McKean-Vlasov SDEs under the assumption that the coefficients have
 bounded and H\"{o}lder continuous flat derivatives (also called linear functional derivatives); in the supercritical case,
 i.e.\ the stability index $\alpha<1$, it is necessary to require $\alpha>2/3$ (see \cite[Theorem 2.2]{NVS}).
In the very recent work \cite{DH23}, we establish the well-posedness for McKean-Vlasov SDEs driven by $\alpha$-stable noise ($1<\alpha<2$), where the noise coefficient depends only on time and distribution variables.

As a continuation of \cite{DH23}, in this paper, we consider the following stable McKean-Vlasov SDEs
with stable index $\alpha\in(0,1]$:
\begin{align}\label{E1}
\d X_t=b_t(X_t,\L_{X_t})\,\d t+\sigma_t(X_t,\L_{X_t})\,\d L_t,\quad t\in[0,T],
\end{align}
where $T>0$ is a fixed constant, $(L_t)_{t \ge 0}$ is a $d$-dimensional rotationally invariant
$\alpha$-stable L\'evy process with infinitesimal generator $-\frac12(-\triangle)^{\alpha/2}$, $\L_{X_t}$ is the law of $X_t$, and for the space
$\scr P$ of all probability measures on $\R^d$ equipped with the weak topology,
$$
    b:[0,T]\times\R^d\times\scr P\rightarrow\R^d,\quad \sigma:[0,T]\times\R^d\times\scr P\rightarrow\R^d\otimes\R^d
$$
are measurable.

For $\kappa\in(0,1]$, let $$\scr P_\kappa:=\big\{\gamma\in \scr P:\ \gamma(|\cdot|^\kappa)<\infty\big\},$$
which is a Polish space under the $L^\kappa$-Wasserstein distance
$$\W_\kappa(\gamma,\tilde{\gamma}):= \inf_{\pi\in \C(\gamma,\tilde{\gamma})} \int_{\R^{d}\times\R^{d}} |x-y|^\kappa \,
\pi(\d x,\d y),\quad \gamma,\tilde{\gamma}\in \scr P_\kappa, $$
where $\C(\gamma,\tilde{\gamma})$ is the set of all couplings of $\gamma$ and $\tilde{\gamma}$.
By \cite[Theorem 5.10]{Chen04},  the following adjoint formula holds:
$$\W_\kappa(\gamma,\tilde{\gamma})=\sup_{[f]_\kappa\leq 1}|\gamma(f)-\tilde{\gamma}(f)|,\quad\gamma,\tilde{\gamma}\in \scr P_\kappa,$$
where $[f]_\kappa$ denotes the H\"{o}lder seminorm (of exponent $\kappa$) of $f:\R^d\rightarrow\R$ defined by $[f]_\kappa:=\sup_{x\neq y}\frac{|f(x)-f(y)|}{|x-y|^\kappa}$.

  To derive the well-posedness for \eqref{E1}, we make the following assumptions.
\beg{enumerate}
\item[$(A)$] $\alpha\in(1/2,1]$. There exist   $\beta\in(0,1)$ satisfying $2\beta+\alpha>2$, $K>1$ and $\eta\in(0,\alpha)$ with $\alpha+\eta>1$ such that for all $t\in[0,T]$, $x,y\in\R^d$ and $\gamma,\tilde{\gamma}\in\scr P_\eta$,
\begin{align}\label{b-b}
|b_t(x,\gamma)-b_t(y,\tilde{\gamma})|\leq K\left(\W_{\eta}(\gamma,\tilde{\gamma})+\big\{|x-y|^\beta\vee|x-y|\big\}\right),
\end{align}
and
\begin{equation}\label{s-s}
\left\{\begin{aligned}
    &|b_t(0,\delta_0)|\leq K,
    \\
    &\|\sigma_t(x,\gamma)-\sigma_t(y,\tilde{\gamma})\|\leq K\big(|x-y|+\W_{\eta}(\gamma,\tilde{\gamma}
)\big),\\
    &K^{-1}I\leq (\sigma_t\sigma^\ast_t)(x,\gamma)\le KI.
\end{aligned}\right.
\end{equation}
\end{enumerate}
\begin{enumerate}
\item[$(A')$] $\alpha\in(0,1]$ and $b_t(x,\gamma)=b_t(x)$ does not depend on $\gamma$. There exist   $\beta\in(0,1)$ satisfying $2\beta+\alpha>2$, $K>1$ and $\eta\in(0,\alpha)$ such that for all $t\in[0,T]$, $x,y\in\R^d$ and $\gamma,\tilde{\gamma}\in\scr P_\eta$,
    $$
      |b_t(x)-b_t(y)|\leq K\big\{|x-y|^\beta\vee|x-y|\big\}
    $$
    and \eqref{s-s} hold.
\end{enumerate}

Denote by $C([0,T];\scr P_k)$ the set of all
continuous maps from $[0,T]$ to $\scr P_k$ under the metric $\W_k$. Throughout the paper the constant $C$
denotes positive constant which may depend on $T,d,\alpha,\beta,\eta,K$; its value may change,
without further notice, from line to line.

Our main result is the following theorem:
\begin{thm}\label{EUS} Assume $(A)$ or $(A')$.
Then \eqref{E1} is strongly/weakly well-posed in $\scr P_{\eta}$, and the solution satisfies $\L_{X_\cdot}\in C([0,T];\scr P_{\eta})$ and
$$
    \E\left[\sup_{t\in[0,T]}|X_t|^{\eta}\right]<C\left(
    1+\E\big[|X_0|^{\eta}\big]\right).
$$

\end{thm}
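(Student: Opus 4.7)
The strategy is the classical McKean-Vlasov decoupling combined with Banach's fixed point theorem on the measure flow, implemented through heat-kernel estimates for the frozen SDE since the distributional dependence of the noise coefficient $\sigma$ precludes a direct coupling argument. Concretely, for each $\mu_\cdot\in C([0,T];\scr P_\eta)$ I would first consider the (distribution-independent) frozen SDE
$$\d X^\mu_t=b_t(X^\mu_t,\mu_t)\,\d t+\sigma_t(X^\mu_t,\mu_t)\,\d L_t,\quad X^\mu_0=X_0.$$
Under $(A)$ or $(A')$ the frozen coefficient $x\mapsto b_t(x,\mu_t)$ is locally $\beta$-Hölder and globally Lipschitz at large scales by \eqref{b-b}, while $x\mapsto\sigma_t(x,\mu_t)$ is Lipschitz with uniformly elliptic square by \eqref{s-s}. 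Since $2\beta+\alpha>2$, these coefficients lie in the standard subcritical range for $\alpha$-stable SDEs, so strong and weak well-posedness of the frozen equation together with two-sided heat-kernel estimates and fractional-gradient bounds for its transition density $p^\mu_{s,t}(x,y)$, uniform in $\mu$, can be imported from the existing stable-SDE literature. A standard growth/BDG argument then yields a uniform moment estimate
$$\E\Big[\sup_{t\in[0,T]}|X^\mu_t|^\eta\Big]\le C\Big(1+\E|X_0|^\eta+\sup_{t\in[0,T]}\mu_t(|\cdot|^\eta)\Big),$$
so the map $\Phi(\mu)_t:=\L_{X^\mu_t}$ sends $C([0,T];\scr P_\eta)$ into itself.

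The heart of the proof, and the main obstacle, is the contraction of $\Phi$ in $\W_\eta$ on a short interval; one cannot simply couple $X^\mu$ and $X^{\tilde{\mu}}$ through the same Lévy path because $\sigma$ itself depends on the measure. Instead I would use the adjoint formula and a Duhamel expansion between the two frozen semigroups: for $[f]_\eta\le 1$,
$$\bigl(P^\mu_{0,t}-P^{\tilde{\mu}}_{0,t}\bigr)f=\int_0^t P^\mu_{0,s}\bigl[(L^\mu_s-L^{\tilde{\mu}}_s)P^{\tilde{\mu}}_{s,t}f\bigr]\,\d s,$$
where $L^\mu_s$ is the generator of $X^\mu$. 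The perturbation $(L^\mu_s-L^{\tilde{\mu}}_s)g$ splits into a drift piece involving $b_s(\cdot,\mu_s)-b_s(\cdot,\tilde{\mu}_s)$ acting on $\nn g$ and a non-local piece built from $\sigma_s(\cdot,\mu_s)-\sigma_s(\cdot,\tilde{\mu}_s)$; by \eqref{b-b} and \eqref{s-s} both are bounded by $K\W_\eta(\mu_s,\tilde{\mu}_s)$ times pointwise fractional derivatives of $g=P^{\tilde{\mu}}_{s,t}f$. The heat-kernel estimates from the first step give bounds on these derivatives of order $(t-s)^{(\eta-1)/\alpha}$ from the drift and $(t-s)^{(\eta-\alpha)/\alpha}$ from the stable perturbation; the hypotheses $\alpha+\eta>1$ (needed only under $(A)$, where $b$ is $\mu$-dependent) and $2\beta+\alpha>2$ are exactly what make these time exponents strictly greater than $-1$ and hence integrable. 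This produces the key inequality
$$\W_\eta\bigl(\Phi(\mu)_t,\Phi(\tilde{\mu})_t\bigr)\le C\int_0^t(t-s)^{-\rho}\W_\eta(\mu_s,\tilde{\mu}_s)\,\d s,\qquad \rho<1,$$
to which a singular Gronwall/iteration argument applies, giving a contraction of some iterate of $\Phi$ on an interval $[0,T_0]$ whose length depends only on the structural constants; Banach's theorem then produces a unique $\mu^*_\cdot$ on $[0,T_0]$.

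Concatenating the short-interval fixed points, which is permitted because the moment bound from the first step propagates uniformly through the endpoint law, yields a unique $\mu^*_\cdot\in C([0,T];\scr P_\eta)$. Weak uniqueness for \eqref{E1} is then immediate: any weak solution generates a measure flow that is itself a fixed point of $\Phi$ and therefore must coincide with $\mu^*_\cdot$, after which weak uniqueness of the frozen SDE forces uniqueness in law for $X$. Strong well-posedness follows from pathwise uniqueness of the frozen SDE under $(A)$ or $(A')$ (Hölder drift with $2\beta+\alpha>2$ plus Lipschitz non-degenerate diffusion) combined with a Yamada-Watanabe-type argument adapted to the McKean-Vlasov setting. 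The final moment estimate of the theorem is simply the uniform-in-$\mu$ bound established in the first step evaluated at $\mu=\mu^*_\cdot$.
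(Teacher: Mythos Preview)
Your proposal is correct and follows essentially the same route as the paper: decouple via the frozen SDE, use the Duhamel identity $P^{\mu}_{0,t}-P^{\tilde\mu}_{0,t}=\int_0^t P^{\mu}_{0,s}(L^{\mu}_s-L^{\tilde\mu}_s)P^{\tilde\mu}_{s,t}\,\d s$, control the drift and non-local pieces through the heat-kernel bounds $(t-s)^{(\eta-1)/\alpha}$ and $(t-s)^{(\eta-\alpha)/\alpha}$ respectively, and close by a fixed-point argument. Two cosmetic differences: the paper equips $C([0,T];\scr P_\eta)$ with the exponentially weighted metric $\sup_t \e^{-\delta t}\W_\eta(\mu^1_t,\mu^2_t)$ and takes $\delta$ large to get a single contraction on all of $[0,T]$, rather than iterating a short-time contraction and concatenating; and for the moment estimate the paper invokes the subordination representation $L_t=W_{S_t}$ (so that the stochastic integral is bounded by $C\,\E[S_T^{\eta/2}]<\infty$), which is a clean way to handle the $\eta<\alpha\le1$ regime where a naive BDG argument needs care. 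One small inaccuracy: the condition $2\beta+\alpha>2$ is used for the well-posedness of the frozen (supercritical, not subcritical) SDE, not for the integrability of the Duhamel time-singularity; the latter rests on $\alpha+\eta>1$ for the drift piece and on $\eta>0$ for the non-local piece, exactly as you identified.
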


\begin{rem}
    If $\alpha<1$, to ensure the well-posedness, it is required in \cite[Theorem 2.2]{NVS}
    that $\alpha>2/3$. In Theorem \ref{EUS}, we can handle the case $\alpha\in(1/2,1)$, and even $\alpha\in(0,1)$ when
    $b_t(x,\gamma)$ does not depend on $\gamma$.
\end{rem}

\begin{rem}
    Let $\alpha\in(1/2,1)$. By Example \ref{conter} in the Appendix, the condition $\alpha+\eta>1$ in $(A)$
    is necessary in the sense that if $\alpha+\eta=1$, then we cannot expect the uniqueness for the solution
    to \eqref{E1}.
\end{rem}

The remainder of the paper is organized as follows: In Section \ref{prepa}, we make some preparations and the proof
of Theorem \ref{EUS} is presented in Section \ref{pf}. A counterexample is provided in the Appendix for
non-uniqueness of solutions to stable McKean-Vlasov SDEs

\section{Some preparations}\label{prepa}

Let $\gamma\in\scr P_\eta$ and $\mu\in C([0,T];\scr P_\eta)$, where $\eta\in(0,\alpha)$. Consider the following (distribution independent)
SDE with initial distribution $\L_{X_{s,s}^{\gg,\mu}}=\gg$:
 \beq\label{ED}
     \d X_{s,t}^{\gg,\mu}= b_t(X_{s,t}^{\gg,\mu}, \mu_t)\,\d t+\sigma_t(X_{s,t}^{\gg,\mu},\mu_t)\,\d L_t,\quad 0\leq s\leq t\leq T.
 \end{equation}
By \cite[Theorem 1.1]{CZZ21} and a standard localization argument, \eqref{ED} has a unique strong solution
under $(A)$ or $(A')$. For simplicity, we denote $X_{t}^{\gg,\mu}=X_{0,t}^{\gg,\mu}$. Moreover, if
$\gamma=\delta_x$ is the Dirac measure concentrated at $x\in\R^d$, we write $X_{s,t}^{x,\mu}=X_{s,t}^{\delta_x,\mu}$.

By \cite{MZ}, $\L_{X_{s,t}^{x,\mu}}$ is absolutely continuous with respect to
the Lebesgue measure, and we denote by $p_{s,t}^{\mu}(x,\cdot)$ the corresponding
density function. Denote by $P_{s,t}^{\mu}$ the inhomogeneous Markov semigroup associated
with $X_{s,t}^{x,\mu}$, i.e.\ for $f\in \scr B_b(\R^d)$,
\begin{align*}
    P_{s,t}^{\mu}f(x)&=\E f(X_{s,t}^{x,\mu})=\int_{\R^d}p_{s,t}^{\mu}(x,y)f(y)\,\d y.
 \end{align*}
Here and in the sequel, $\scr B_b(\R^d)$ denotes the set of all bounded measurable
functions on $\R^d$. As before, write $p_{t}^{\mu}(x,\cdot)=p_{0,t}^{\mu}(x,\cdot)$ and $P_{t}^{\mu}=P_{0,t}^{\mu}$
for $t\in[0,T]$ and $\mu\in C([0,T];\scr P_\eta)$. We will use the following notation
for $\mu\in C([0,T];\scr P_\eta)$
\beg{equation}\label{L34}
\A^{\mu}_t f(\cdot):=\int_{\mathbb{R}^{d}\backslash\{0\}}\big[f(\cdot+\sigma_t(\cdot,\mu_t)y)-f(\cdot)
-\langle \sigma_t(\cdot,\mu_t)y,\nabla f(\cdot)\rangle\mathds{1}_{\{ |y|\leq 1\}} \big]\,\Pi(\d y),
\end{equation}
where
$$
    \Pi(\d y):=\frac{\alpha\Gamma(\frac{d+\alpha}{2})}
    {2^{2-\alpha}\pi^{d/2}\Gamma(1-\frac\alpha2)}\,\frac{\d y}{|y|^{d+\alpha}}
$$
is the L\'{e}vy measure of $L_t$.

\beg{lem}\label{L2}  Assume $(A)$ or $(A')$.  For any $0\leq r<t\le T$, $
\mu^1,\mu^2 \in C([0,T];\scr P_{\eta})$, and $f\in\scr B_b(\R^d)$ with $[f]_\eta\leq 1$,
\begin{equation}\label{g0'}
|\nabla P_{r,t}^{\mu^2}f|\leq C(t-r)^{-\frac1\alpha+\frac\eta\alpha},
\end{equation}
\begin{equation}\label{g2'}
|(\scr A_r^{\mu^1}-\scr A_r^{\mu^2})P_{r,t}^{\mu^2}f|\leq C(t-r)^{-1+\frac{\eta}{\alpha}}\W_{\eta}(\mu_r^1,\mu_r^2).
\end{equation}
\end{lem}
\begin{proof}
(i) Denote by $p^\alpha_t$ the density function of $L_t$ (with respect to the Lebesgue measure).
It follows from \cite[Theorem 1.1\,(iii) and (i)]{MZ} that for any $0\leq r<t\le T$,
$$
    |\nabla p_{r,t}^{\mu^2}(\cdot, y)(x)|\leq C(t-r)^{-1/\alpha}p^\alpha_{t-r}\left(\theta_{r,t}(x)-y\right),
$$
where $\theta_{r,t}(x)$ denotes the flow associated to the drift in \eqref{ED}, i.e.
$$
    \left\{
    \begin{aligned}
        &\displaystyle\frac{\partial}{\partial t}\,\theta_{r,t}(x)=b\left(\theta_{r,t}(x)\right),\\
        &\theta_{r,r}(x)=x.
    \end{aligned}
    \right.
$$
Noting that
$$
    \nabla_x\underbrace{\int_{\R^d}p_{r,t}^{\mu^2}(x,y)\,\d y}_{=1}=0,
$$
we obtain for all $f\in\scr B_b(\R^d)$ with $[f]_\eta\leq 1$,
\begin{align*}
|\nabla P_{r,t}^{\mu^2}f(x)|&=\left|\int_{\R^d} \nabla p_{r,t}^{\mu^2}(\cdot, y)(x)f(y)\,\d y\right|\\
&=\left|\int_{\R^d} \nabla p_{r,t}^{\mu^2}(\cdot, y)(x)[f(y)-f(\theta_{r,t}(x))]\,\d y\right|\\
&\leq\int_{\R^d} |\nabla p_{r,t}^{\mu^2}(\cdot, y)(x)|\times|f(y)-f(\theta_{r,t}(x))|\,\d y\\
&\leq C(t-r)^{-1/\alpha}\int_{\R^d}p^\alpha_{t-r}\left(\theta_{r,t}(x)-y\right)|\theta_{r,t}(x)-y|^\eta\,\d y\\
&=C(t-r)^{-1/\alpha}\int_{\R^d}p^\alpha_{t-r}(z)|z|^\eta\,\d y\\
&=C(t-r)^{-1/\alpha}\E \big[ |L_{t-r}|^\eta\big]\\
&=C(t-r)^{(\eta-1)/\alpha}\E \big[ |L_{1}|^\eta\big].
\end{align*}
This implies \eqref{g0'} since $\E \big[ |L_{1}|^\eta\big]<\infty$.

\smallskip

\noindent
(ii) By \eqref{s-s}, it is not hard to get
\begin{align*}
&\left|\frac{|\mathrm{det}(\sigma_t^{-1}(\cdot,\mu^1_t))|}{|\sigma_t^{-1}(\cdot,\mu^1_t)y|^{d+\alpha}} -\frac{|\mathrm{det}(\sigma_t^{-1}(\cdot,\mu^2_t))|}{|\sigma_t^{-1}(\cdot,\mu^2_t)y|^{d+\alpha}}\right|\\
&=\left|\frac{|\sigma_t^{-1}(\cdot,\mu^2_t)y|^{d+\alpha}|\mathrm{det} (\sigma_t^{-1}(\cdot,\mu^1_t))|-|\sigma_t^{-1}(\cdot,\mu^1_t) y|^{d+\alpha}|\mathrm{det}(\sigma_t^{-1}(\cdot,\mu^2_t))|}{|\sigma_t^{-1}(\cdot,\mu^1_t)y|^{d+\alpha} |\sigma_t^{-1}(\cdot,\mu^2_t)y|^{d+\alpha}} \right|\\
&\leq \left|\frac{|\sigma_t^{-1}(\cdot,\mu^2_t)y|^{d+\alpha}[|\mathrm{det} (\sigma_t^{-1}(\cdot,\mu^1_t))|-|\mathrm{det}(\sigma_t^{-1}(\cdot,\mu^2_t))|]}{|\sigma_t^{-1}(\cdot,\mu^1_t)y|^{d+\alpha} |\sigma_t^{-1}(\cdot,\mu^2_t)y|^{d+\alpha}} \right|\\
&\quad+\left|\frac{|\mathrm{det} (\sigma_t^{-1}(\cdot,\mu^2_t))|[|\sigma_t^{-1}(\cdot,\mu^2_t)y|^{d+\alpha}-|\sigma_t^{-1}(\cdot,\mu^1_t) y|^{d+\alpha}]}{|\sigma_t^{-1}(\cdot,\mu^1_t)y|^{d+\alpha} |\sigma_t^{-1}(\cdot,\mu^2_t)y|^{d+\alpha}} \right|\\
&\leq C \frac{\W_{\eta}(\mu^1_t,\mu^2_t)}{|y|^{d+\alpha}}.
\end{align*}
Since we can rewrite \eqref{L34} as a principal value (p.v.) integral:
\begin{align*}
\A^{\nu}_t f(\cdot)&=\frac{1}{2}\,\mathrm{p.v.}\int_{\mathbb{R}^{d}}\big[f(\cdot+\sigma_t(\cdot,\nu_t)y)+f(\cdot-\sigma_t(\cdot,\nu_t)y)-2f(\cdot)
\big]\,\Pi(\d y)\\
&=\frac{\alpha\Gamma(\frac{d+\alpha}{2})}
    {2^{3-\alpha}\pi^{d/2}\Gamma(1-\frac\alpha2)}\,\mathrm{p.v.}\int_{\mathbb{R}^{d}}\big[f(\cdot+y)+f(\cdot-y)-2f(\cdot)
\big]\,\frac{|\mathrm{det}(\sigma_t^{-1}(\cdot,\nu_t))|}{|\sigma_t^{-1}(\cdot,\nu_t)y|^{d+\alpha}}\,\d y,
\end{align*}
it holds that
\begin{align*}
&|(\A^{\mu^1}_r-\A^{\mu^2}_r)P^{\mu^2}_{r,t}f|\\
&=C\left|
\mathrm{p.v.}\int_{\mathbb{R}^{d}}\big[P^{\mu^2}_{r,t}f(\cdot+y)+P^{\mu^2}_{r,t}f(\cdot-y)-2P^{\mu^2}_{r,t}f(\cdot)
\big]\left[\frac{|\mathrm{det}(\sigma_t^{-1}(\cdot,\mu^1_t))|}{|\sigma_t^{-1}(\cdot,\mu^1_t)y|^{d+\alpha}} -\frac{|\mathrm{det}(\sigma_t^{-1}(\cdot,\mu^2_t))|}{|\sigma_t^{-1}(\cdot,\mu^2_t)y|^{d+\alpha}}
\right]\d y\right|\\
&\leq C\,\mathrm{p.v.}\int_{\mathbb{R}^{d}}\big|
P^{\mu^2}_{r,t}f(\cdot+y)+P^{\mu^2}_{r,t}f(\cdot-y)-2P^{\mu^2}_{r,t}f(\cdot)
\big|\left|
\frac{|\mathrm{det}(\sigma_t^{-1}(\cdot,\mu^1_t))|}{|\sigma_t^{-1}(\cdot,\mu^1_t)y|^{d+\alpha}} -\frac{|\mathrm{det}(\sigma_t^{-1}(\cdot,\mu^2_t))|}{|\sigma_t^{-1}(\cdot,\mu^2_t)y|^{d+\alpha}}
\right|\d y\\
&\leq C\W_{\eta}(\mu^1_t,\mu^2_t)\,\mathrm{p.v.}\int_{\mathbb{R}^{d}}\big|
P^{\mu^2}_{r,t}f(\cdot+y)+P^{\mu^2}_{r,t}f(\cdot-y)-2P^{\mu^2}_{r,t}f(\cdot)
\big|\frac{\d y}{|y|^{d+\alpha}}\\
&= C\W_{\eta}(\mu^1_t,\mu^2_t)|\scr D^\alpha P_{r,t}^{\mu^2}f|,
\end{align*}
where $\scr D^\alpha$ is a fractional derivative operator of order $\alpha$ (cf. \cite[(1.23)]{MZ}) with
$$|\scr D^\alpha f|(x):=\int_{\R^d}|f(x+y)+f(x-y)-2f(x)|\,\frac{\d y}{|y|^{d+\alpha}}.$$
By \cite[Theorem 1.1\,(ii)]{MZ}, for any $0\leq r <t\leq T$,
$$
    |\scr D^\alpha p_{r,t}^{\mu^2}(\cdot,z)|(x)\leq C(t-r)^{-1}p_{t-r}^\alpha\left(\theta_{r,t}(x)-z\right).
$$
Then we get for all $f\in\scr B_b(\R^d)$ with $[f]_\eta\leq 1$,
\begin{align*}
|\scr D^\alpha P_{r,t}^{\mu^2}f|(x)
&=\int_{\R^d}\left|\int_{\R^d}\big[p_{r,t}^{\mu^2}(x+y,z)+p_{r,t}^{\mu^2}(x-y,z)-2p_{r,t}^{\mu^2}(x,z)\big]f(z)\,
\d z\right|\frac{\d y}{|y|^{d+\alpha}}\\
&=\int_{\R^d}\left|\int_{\R^d}\big[p_{r,t}^{\mu^2}(x+y,z)+p_{r,t}^{\mu^2}(x-y,z)-2p_{r,t}^{\mu^2}(x,z)\big][f(z)-f(\theta_{r,t}(x))]\,
\d z\right|\frac{\d y}{|y|^{d+\alpha}}\\
&\leq\int_{\R^d}|\scr D^\alpha p_{r,t}^{\mu^2}(\cdot,z)|(x)|f(\theta_{r,t}(x))-f(z)|\,\d z\\
&\leq C(t-r)^{-1}\int_{\R^d}p_{t-r}^\alpha(\theta_{r,t}(x)-z)|\theta_{r,t}(x)-z|^\eta\,\d z\\
&=C(t-r)^{-1}\E\big[|L_{t-r}|^\eta\big]\\
&=C(t-r)^{-1+\frac\eta\alpha}\E\big[|L_{1}|^\eta\big].
\end{align*}
This together with the above estimate implies \eqref{g2'}.
\end{proof}

\begin{lem}\label{dunh4}
Let $0\leq s<t\leq T$, $\mu^1,\mu^2\in C([0,T];\scr P_\eta)$ for $\eta\in(0,\alpha)$ and $f\in\scr B_b(\R^d)$. Then
$$P_{s,t}^{\mu^1}f = P_{s,t}^{\mu^2}f + \int_s^t  P_{s,r}^{\mu^1}
\big\<b_r(\cdot, \mu_r^1)-b_r(\cdot, \mu_r^2),\nabla P_{r,t}^{\mu^2}f\big\>\,\d r+\int_s^tP_{s,r}^{\mu^1}
(\scr A_r^{\mu_r^1}-\scr A_r^{\mu_r^2})P_{r,t}^{\mu^2}f\,\d r.$$
If furthermore $b$ does not depend on distribution variable, then
$$P_{s,t}^{\mu^1}f = P_{s,t}^{\mu^2}f +\int_s^tP_{s,r}^{\mu^1}
(\scr A_r^{\mu_r^1}-\scr A_r^{\mu_r^2})P_{r,t}^{\mu^2}f\,\d r.$$
\end{lem}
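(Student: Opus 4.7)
The plan is to establish this as a standard Duhamel/interpolation formula between the two inhomogeneous semigroups, based on the identity
$$\ff{\d}{\d r}\bigl(P_{s,r}^{\mu^1}P_{r,t}^{\mu^2}f\bigr)=P_{s,r}^{\mu^1}\bigl(\L_r^{\mu^1}-\L_r^{\mu^2}\bigr)P_{r,t}^{\mu^2}f,$$
where $\L_r^{\mu^i}g:=\langle b_r(\cdot,\mu_r^i),\nn g\rangle+\A_r^{\mu^i}g$ is the generator of $X_{s,r}^{\cdot,\mu^i}$. Integrating from $s$ to $t$ and noting the telescoping at the endpoints yields the first identity; when $b$ is measure-independent, the drift contribution in $\L_r^{\mu^1}-\L_r^{\mu^2}$ drops out, giving the second identity.

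To make this rigorous I would proceed via It\^o's formula. Fix $f\in\scr B_b(\R^d)$, $x\in\R^d$ and set $u(r,\cdot):=P_{r,t}^{\mu^2}f$ for $r\in[s,t)$. By the heat kernel regularity of \cite[Theorem 1.1]{MZ}, $u(r,\cdot)$ is smooth in space for every $r<t$ and satisfies the backward Kolmogorov equation $\pp_r u+\L_r^{\mu^2}u=0$ with $u(t,\cdot)=f$. Apply It\^o's formula to $r\mapsto u(r,X_{s,r}^{x,\mu^1})$ on $[s,t-\vv]$ using \eqref{ED} (with the jump term handled through the compensated Poisson integral associated to $L_t$). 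Combining with the backward equation for $u$ produces
$$u(t-\vv,X_{s,t-\vv}^{x,\mu^1})-u(s,x)=M_{t-\vv}+\int_s^{t-\vv}\bigl(\L_r^{\mu^1}-\L_r^{\mu^2}\bigr)u(r,\cdot)\bigl(X_{s,r}^{x,\mu^1}\bigr)\d r,$$
where $M$ is a martingale. Taking expectations makes the martingale vanish and leaves the two Duhamel-type integrals after unfolding $\L_r^{\mu^1}-\L_r^{\mu^2}$ into its drift and non-local pieces.

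It remains to pass $\vv\da0$. On the left, $\E u(t-\vv,X_{s,t-\vv}^{x,\mu^1})\to P_{s,t}^{\mu^1}f(x)$ by the semigroup property and boundedness of $f$. For the integrand, I would use Lemma \ref{L2}: the bound \eqref{g0'} together with \eqref{b-b} controls $|\<b_r(\cdot,\mu_r^1)-b_r(\cdot,\mu_r^2),\nn P_{r,t}^{\mu^2}f\>|$ by an integrable power $(t-r)^{-1/\alpha+\eta/\alpha}$, and \eqref{g2'} controls the non-local difference by the integrable power $(t-r)^{-1+\eta/\alpha}$. Dominated convergence then yields the claimed identity. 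A density argument (first for $f\in C_b^\infty$, then for $f\in\scr B_b$) can replace the cutoff $\vv$ if one prefers to avoid the endpoint issue directly.

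The main obstacle is the endpoint singularity at $r=t$: $u(r,\cdot)$ is merely bounded measurable at $r=t$, and derivatives blow up as $r\ua t$. This is exactly what Lemma \ref{L2} is designed for, since the singular rates $(t-r)^{-1/\alpha+\eta/\alpha}$ and $(t-r)^{-1+\eta/\alpha}$ are integrable on $[s,t]$ precisely because $\eta>0$ (and in the first case because $\alpha+\eta>1$ under assumption $(A)$, while under $(A')$ the first integral is absent). No further obstacle arises for the second identity, which is a direct specialization of the first.
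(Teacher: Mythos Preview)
Your approach is essentially the same as the paper's: both differentiate $r\mapsto P_{s,r}^{\mu^1}P_{r,t}^{\mu^2}f$ using the forward equation for $P_{s,r}^{\mu^1}$ (which you obtain via It\^o at the process level) and the backward Kolmogorov equation for $P_{r,t}^{\mu^2}$, then integrate. The paper organizes the technicalities slightly differently, reducing first to $f\in C_b^2(\R^d)$ by approximation so that no endpoint cutoff at $t-\vv$ is needed; note that your appeal to Lemma~\ref{L2} for the dominated convergence step presupposes $[f]_\eta<\infty$, so the density argument you mention (smooth $f$ first, then general $f\in\scr B_b$) is in fact required rather than optional, and at that point your argument coincides with the paper's.
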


\begin{proof}
By a standard approximation argument, it suffices to prove the desired assertion
for $f\in C_b^2(\R^d)$. By the backward Kolmogorov equation, see \cite[Theorem 1.1]{MZ}, it holds that
$$
\frac{\partial P_{r,t}^{\mu^2}f}{\partial r}
=-\<b_r(\cdot, \mu_r^2),\nabla P_{r,t}^{\mu^2}f\>-\A_r^{\mu^2}(P_{r,t}^{\mu^2}f),\quad 0\leq r< t\leq T,
$$
where $\A^{\nu}_r f$ is given by \eqref{L34}.
By It\^{o}'s formula, we have the forward Kolmogorov equation
$$\frac{\partial P_{s,r}^{\mu^1}f}{\partial r}
=P_{s,r}^{\mu^1} [\<b_r(\cdot, \mu_r^1),\nabla f\>+\A_r^{\mu^1}f],\quad  0\leq s< r\leq T.$$
Hence, we have
\begin{align*}
    P_{s,t}^{\mu^1}f -P_{s,t}^{\mu^2}f
    &=\int_s^t
    \frac{\partial}{\partial r}[P_{s,r}^{\mu^1}P^{\mu^2}_{r,t}f]\,\d r\\
    &=\int_s^t  P_{s,r}^{\mu^1}
\big\<b_r(\cdot, \mu_r^1)-b_r(\cdot, \mu_r^2),\nabla P_{r,t}^{\mu^2}f\big\> \d r+\int_s^tP_{s,r}^{\mu^1}
(\scr A_r^{\mu_r^1}-\scr A_r^{\mu_r^2})P_{r,t}^{\mu^2}f\,\d r,
\end{align*}
which implies the first assertion. Clearly, the second assertion follows immediately from the first one.
\end{proof}

\begin{lem}\label{vdh22s}
If $(A)$ holds, then for all $\gg\in \scr P_{\eta}$, $\mu^i\in C([0,T];\scr P_{\eta})$, $i=1,2$, and $\delta>0$,
$$
   \sup_{t\in[0,T]}\e^{-\delta t}\W_\eta\big(\L_{{X}_{t}^{\gg,\mu^1}},\L_{{X}_{t}^{\gg,\mu^2}}\big)
    \leq C\left(\delta^{\frac{1}{\alpha}-\frac{\eta}{\alpha}-1}+
    \delta^{-\frac{\eta}{\alpha}}\right)
    \sup_{t\in[0,T]}\e^{-\delta t}\W_{\eta}(\mu^1_t,\mu^2_t).
$$
If $(A')$ holds, then for all $\gg\in \scr P_{\eta}$, $\mu^i\in C([0,T];\scr P_{\eta})$, $i=1,2$, and $\delta>0$,
$$
    \sup_{t\in[0,T]}\e^{-\delta t}\W_{\eta}\big(\L_{{X}_{t}^{\gg,\mu^1}},\L_{{X}_{t}^{\gg,\mu^2}}\big)
    \leq C\delta^{-\frac{\eta}{\alpha}}\sup_{t\in[0,T]}\e^{-\delta t}\W_{\eta}(\mu^1_t,\mu^2_t).
$$
\end{lem}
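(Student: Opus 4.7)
The plan is to combine three ingredients: the Kantorovich--Rubinstein duality $\W_\eta(\gg_1,\gg_2)=\sup_{[f]_\eta\le 1}|\gg_1(f)-\gg_2(f)|$, the Duhamel representation of Lemma \ref{dunh4}, and the gradient/fractional-derivative estimates \eqref{g0'} and \eqref{g2'}. Fix $t\in[0,T]$ and $f\in\scr B_b(\R^d)$ with $[f]_\eta\le 1$. Since both processes start from $\gg$, we have $\E f(X_t^{\gg,\mu^i})=\gg(P_{0,t}^{\mu^i}f)$, so the dual formula reduces the task to estimating $|P_{0,t}^{\mu^1}f(x)-P_{0,t}^{\mu^2}f(x)|$ uniformly in $x$ and $\gg$.

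Apply Lemma \ref{dunh4} with $s=0$ to split this difference into a drift term and a noise term. Under $(A)$, the Lipschitz assumption \eqref{b-b} gives $|b_r(x,\mu_r^1)-b_r(x,\mu_r^2)|\le K\W_\eta(\mu_r^1,\mu_r^2)$ uniformly in $x$; combined with \eqref{g0'} this controls the drift contribution, while \eqref{g2'} handles the noise contribution. Pulling absolute values through the Markov operator $P_{0,r}^{\mu^1}$ then yields the pointwise bound
$$
|P_{0,t}^{\mu^1}f(x)-P_{0,t}^{\mu^2}f(x)|\le C\int_0^t\big[(t-r)^{(\eta-1)/\alpha}+(t-r)^{\eta/\alpha-1}\big]\W_\eta(\mu_r^1,\mu_r^2)\,\d r,
$$
whose right-hand side is independent of $x$. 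Taking the supremum over $f$ with $[f]_\eta\le 1$ and integrating against $\gg$ converts this into the corresponding bound for $\W_\eta(\L_{X_t^{\gg,\mu^1}},\L_{X_t^{\gg,\mu^2}})$, uniformly in the initial distribution $\gg$.

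To pass to the exponentially weighted supremum, I would multiply by $\e^{-\delta t}$, write $\W_\eta(\mu_r^1,\mu_r^2)=\e^{\delta r}\,\big(\e^{-\delta r}\W_\eta(\mu_r^1,\mu_r^2)\big)$, factor out $\sup_{s\in[0,T]}\e^{-\delta s}\W_\eta(\mu_s^1,\mu_s^2)$, and evaluate the remaining convolutions by the Gamma identity $\int_0^\infty \e^{-\delta u}u^{a-1}\,\d u=\Gamma(a)\delta^{-a}$. For the drift term the exponent is $a=(\alpha+\eta-1)/\alpha$, producing the factor $\delta^{1/\alpha-\eta/\alpha-1}$; for the noise term $a=\eta/\alpha$, producing $\delta^{-\eta/\alpha}$. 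Summing yields the first claim. Under $(A')$ the drift term vanishes by the second half of Lemma \ref{dunh4}, leaving only the $\delta^{-\eta/\alpha}$ contribution, which explains the absence of the condition $\alpha+\eta>1$ in that setting. The only hypothesis-sensitive step, and the main obstacle to be attentive to, is the integrability at the upper endpoint $r=t$: the drift exponent $(\eta-1)/\alpha$ exceeds $-1$ precisely because $(A)$ postulates $\alpha+\eta>1$, whereas the noise exponent $\eta/\alpha-1$ is automatically above $-1$ since $\eta>0$.
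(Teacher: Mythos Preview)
Your proposal is correct and follows essentially the same route as the paper: duality for $\W_\eta$, the Duhamel identity of Lemma~\ref{dunh4}, the pointwise bounds \eqref{g0'}--\eqref{g2'} combined with $|b_r(\cdot,\mu_r^1)-b_r(\cdot,\mu_r^2)|\le K\W_\eta(\mu_r^1,\mu_r^2)$, and the exponential-weight trick evaluated via the Gamma integral. Your closing remark on the integrability condition $\alpha+\eta>1$ is exactly the point that distinguishes $(A)$ from $(A')$.
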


\begin{proof}
Assume $(A)$. It follows form the definition of $\W_{\eta}$ and Lemma \ref{dunh4} that
\begin{align*}
    \W_\eta\big(\L_{{X}_{t}^{\gg,\mu^1}},\L_{{X}_{t}^{\gg,\mu^2}}\big)
    &=\sup_{f\in\scr B_b(\R^d),[f]_\eta\leq 1}\left|
    \int_{\R^d}\big[
    P_{t}^{\mu^1}f(x)-P_{t}^{\mu^2}f(x)\big]\,\gg(\d x)\right|\\
    &\leq \sup_{f\in\scr B_b(\R^d),[f]_\eta\leq 1}\left|\int_{\R^d}\gg(\d x) \int_0^t  P_{0,r}^{\mu^1}
\big\<b_r(\cdot, \mu_r^1)-b_r(\cdot, \mu_r^2),\nabla P_{r,t}^{\mu^2}f\big\>(x)\,\d r
    \right|\\
    &\quad+\sup_{f\in\scr B_b(\R^d),[f]_\eta\leq 1}\left|\int_{\R^d}\gamma(\d x)\int_0^tP_{0,r}^{\mu^1}
\{(\scr A_r^{\mu_r^1}-\scr A_r^{\mu_r^2})P_{r,t}^{\mu^2}f\}(x)\,\d r\right|\\
&=:\sum_{i=1}^2\sup_{f\in\scr B_b(\R^d),[f]_\eta\leq 1}\mathsf{J}_i.
\end{align*}
By \eqref{b-b} and \eqref{g0'}, we derive that
for all $f\in\scr B_b(\R^d)$ with $[f]_\eta\leq 1$,
\begin{align*}
    \big\|\big\<b_r(\cdot, \mu_r^1)-b_r(\cdot, \mu_r^2),\nabla P_{r,t}^{\mu^2}f(\cdot)\big\>\big\|_{\infty}
    &\leq\big\|b_r(\cdot, \mu_r^1)-b_r(\cdot, \mu_r^2)\big\|_{\infty}\big\|\nabla P_{r,t}^{\mu^2}f(\cdot)\big\|_{\infty}\\
    &\leq C(t-r)^{-\frac1\alpha+\frac\eta\alpha}\W_{\eta}(\mu^1_r,\mu^2_r).
\end{align*}
Then we get for all $t\in[0,T]$, $\delta>0$ and $f\in\scr B_b(\R^d)$ with $[f]_\eta\leq 1$,
\begin{align*}
    \mathsf{J}_1&\leq C\int_0^t(t-r)^{-\frac1\alpha+\frac\eta\alpha}\W_{\eta}(\mu^1_r,\mu^2_r)\,\d r\\
    &=C\e^{\delta t}\int_0^t\e^{-\delta r}\W_{\eta}(\mu^1_r,\mu^2_r)\cdot(t-r)^{-\frac1\alpha+\frac\eta\alpha}
    \e^{-\delta (t-r)}\,\d r\\
    &\leq C\e^{\delta t}\sup_{s\in[0,T]}\e^{-\delta s}\W_{\eta}(\mu^1_s,\mu^2_s)
    \times\int_0^t(t-r)^{-\frac1\alpha+\frac\eta\alpha}
    \e^{-\delta (t-r)}\,\d r\\
    &\leq C\delta^{\frac1\alpha-\frac\eta\alpha-1}\e^{\delta t}\sup_{s\in[0,T]}\e^{-\delta s}\W_{\eta}(\mu^1_s,\mu^2_s),
\end{align*}
where in the last inequality we have used the fact that for any $\epsilon\in (0,1)$,
\begin{equation}\label{expint}
    \sup_{t\in[0,T]}
    \int_0^t(t-r)^{-\epsilon}\e^{-\delta(t-r)}\,\d r
    \leq\int_0^\infty r^{-\epsilon}\e^{-\delta r}\,\d r
    =\Gamma\left(1-\epsilon\right)\delta^{\epsilon-1}.
\end{equation}
By \eqref{g2'} and \eqref{expint}, for all $t\in[0,T]$, $\delta>0$ and $f\in\scr B_b(\R^d)$ with $[f]_\eta\leq 1$,
\begin{align*}
    \mathsf{J}_2&\leq C\int_0^t(t-r)^{-1+\frac\eta\alpha}\W_{\eta}(\mu^1_r,\mu^2_r)\,\d r\\
    &=C\e^{\delta t}\int_0^t\e^{-\delta r}\W_{\eta}(\mu^1_r,\mu^2_r)\cdot(t-r)^{-1+\frac\eta\alpha}
    \e^{-\delta (t-r)}\,\d r\\
    &\leq C\e^{\delta t}\sup_{s\in[0,T]}\e^{-\delta s}\W_{\eta}(\mu^1_s,\mu^2_s)\times
    \int_0^t(t-r)^{-1+\frac\eta\alpha}
    \e^{-\delta (t-r)}\,\d r\\
    &\leq C\delta^{-\frac\eta\alpha}\e^{\delta t}\sup_{s\in[0,T]}\e^{-\delta s}\W_{\eta}(\mu^1_s,\mu^2_s).
\end{align*}
Combining the bounds for $\mathsf{J}_i$, $i=1,2$, we obtain that for all $\delta>0$
\begin{align*}
    \sup_{t\in[0,T]}\e^{-\delta t}\W_\eta\big(\L_{{X}_{t}^{\gg,\mu^1}},\L_{{X}_{t}^{\gg,\mu^2}}\big)
    &\leq\sum_{i=1}^2\sup_{t\in[0,T]}\sup_{f\in\scr B_b(\R^d),[f]_\eta\leq 1}\e^{-\delta t}\mathsf{J}_i\\
    &\leq C\left(\delta^{\frac1\alpha-\frac\eta\alpha-1}+\delta^{-\frac\eta\alpha}\right)
    \sup_{s\in[0,T]}\e^{-\delta s}\W_{\eta}(\mu^1_s,\mu^2_s).
\end{align*}
This yields the first assertion. One can prove the second assertion by
repeating the argument above (with $\mathsf{J}_1=0$).
\end{proof}

\section{Proof of Theorem \ref{EUS}}\label{pf}

\begin{proof}[Proof of Theorem \ref{EUS}]
It follows from Lemma \ref{vdh22s} that for $\delta>0$ large enough, the map
$$
    \mu\mapsto \L_{X_{\cdot}^{\gg,\mu}}
$$
is strictly contractive in  $C([0,T];\scr P_\eta)$  under the complete metric
$$\sup_{t\in[0,T]}\e^{-\delta t}\W_{\eta}(\mu^1_t,\mu^2_t)$$
for $\mu^1,\mu^2\in C([0,T];\scr P_\eta)$.
Then it has a unique fixed point $\mu^\ast=\mu^\ast(\gg)\in  C([0,T];\scr P_\eta)$
such that $\mu^\ast=\L_{X_{\cdot}^{\gg,\mu^\ast}}$, and $X_t=X_{t}^{\gg,\mu^\ast}$ is the unique solution
to \eqref{E1} with $\L_{X_0}=\gamma\in\scr P_\eta$.

To prove the moment estimate, we will use a (random) time-change argument.
Let $S_t$ be an $\frac{\alpha}{2}$-stable
subordinator with the following Laplace transform:
$$
    \E\left[\e^{-rS_t}\right] = \e^{-2^{-1}t (2r)^{\alpha/2}},\quad r>0,\,t\geq 0,
$$
and let $W_t$ be a $d$-dimensional standard Brownian motion, which is independent of $S_t$.
The time-changed process $L_{t}:=W_{S_{t}}$ is a $d$-dimensional rotationally symmetric $\alpha$-stable L\'evy process
such that $\E\,\e^{\iup \<\xi, L_t\>}=\e^{-t|\xi|^\alpha/2}$ for $\xi\in\R^d$, see e.g.\ \cite{sato}.
Using the subordination representation, \eqref{E1} can be written in the following form
$$
    X_t=X_0+\int_0^tb_r(X_r,\L_{X_r})\,\d r+\int_0^t\sigma_r(X_r,\L_{X_r})\,\d W_{S_r},
$$
where $\L_{X_0}\in\scr P_\eta$. It is easy to see that $(A)$ or $(A')$ implies
for $x\in\R^d$ and $\gamma\in\scr P_\eta$,
$$
    \sup_{t\in[0,T]}|b_t(x,\gamma)|\leq C\big(1+|x|+\gamma(|\cdot|^\eta)\big).
$$
Since $\sigma$ is bounded due to \eqref{s-s}, we obtain for all $s\in[0,T]$,
\begin{align*}
    \E\left[\sup_{t\in[0,s]}|X_t|^\eta\right]
    &\leq C \E\left[|X_0|^\eta\right]+C\E\left[
    \int_0^s|b_r(X_r,\L_{X_r})|^\eta\,\d r
    \right]\\
    &\quad+C\E\left[
    \sup_{t\in[0,T]}\left|
    \int_0^t\sigma_r(X_r,\L_{X_r})\,\d W_{S_r}
    \right|^\eta
    \right]\\
    &\leq C \E\left[|X_0|^\eta\right]+C\int_0^s\left(1+
    \E\left[|X_r|^\eta\right]
    \right)\d r+C\E\left[S_T^{\eta/2}\right]\\
    &\leq C\left(1+\E\left[|X_0|^\eta\right]\right)
    +C\int_0^s\E\left[\sup_{t\in[0,r]}|X_t|^\eta\right]\d r,
\end{align*}
which together with the Gronwall inequality yields that
$$
    \E\left[\sup_{t\in[0,s]}|X_t|^\eta\right]\leq C\left(1+\E\left[|X_0|^\eta\right]\right)
    \e^{Cs}\leq C\left(1+\E\left[|X_0|^\eta\right]\right), \quad s\in[0,T].
$$
This completes the proof.
\end{proof}

\section{Appendix}\label{appd}

\begin{lem}\label{ss21dd}
    Let $\alpha\in(1/2,1)$ and $L_1$ be a \textup{(}stable\textup{)} random variable with $\E\,\e^{\i\xi L_1}=\e^{-|\xi|^\alpha}$.
    Then there exist $c>0$ and $\varrho>0$ such that
    \begin{equation}\label{eqjj}
        c=\alpha\E\left[\operatorname{sgn}(c+\varrho L_1)|c+\varrho L_1|^{1-\alpha}\right].
    \end{equation}
\end{lem}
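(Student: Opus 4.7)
The plan is to exploit a one-parameter scaling of \eqref{eqjj} in order to reduce the whole problem to showing positivity of a single expectation. Given any $u>0$ and $\varrho>0$, set $c:=u\varrho$. Then $c+\varrho L_1=\varrho(u+L_1)$, so $|c+\varrho L_1|^{1-\alpha}=\varrho^{1-\alpha}|u+L_1|^{1-\alpha}$ and $\operatorname{sgn}(c+\varrho L_1)=\operatorname{sgn}(u+L_1)$. Substituting, \eqref{eqjj} is equivalent to
$$
    u\varrho^{\alpha}=\alpha f(u),\qquad f(u):=\E\big[\operatorname{sgn}(u+L_1)\,|u+L_1|^{1-\alpha}\big],
$$
which admits a positive solution $\varrho=(\alpha f(u)/u)^{1/\alpha}$ as soon as $f(u)>0$. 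Any such $(u,\varrho,c=u\varrho)$ produces the pair required by the lemma.

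Next I would check that $f$ is well defined and strictly positive for every $u>0$. Well-definedness is exactly where the assumption $\alpha>1/2$ enters: the symmetric $\alpha$-stable law has finite absolute moment of every order strictly less than $\alpha$, and $1-\alpha<\alpha$, so $\E[|L_1|^{1-\alpha}]<\infty$; combined with the subadditivity inequality $(a+b)^{1-\alpha}\leq a^{1-\alpha}+b^{1-\alpha}$ for $a,b\geq 0$ this yields $\E[|u+L_1|^{1-\alpha}]<\infty$. For strict positivity I would symmetrise using $L_1\stackrel{d}{=}-L_1$ and write
$$
    2f(u)=\E\big[g(u+L_1)+g(u-L_1)\big],\qquad g(x):=\operatorname{sgn}(x)|x|^{1-\alpha}.
$$
The function $g$ is odd and equals $x^{1-\alpha}$ on $(0,\infty)$, hence strictly increasing on all of $\R$ because $1-\alpha>0$. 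For $u>0$ and any $y\in\R$ one has $u+y>-(u-y)$, whence $g(u+y)>g(-(u-y))=-g(u-y)$, i.e.\ $g(u+y)+g(u-y)>0$ pointwise. Taking expectations gives $f(u)>0$.

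The conclusion then follows immediately: choose any $u>0$ (for concreteness, $u=1$), and define $\varrho:=(\alpha f(u)/u)^{1/\alpha}$ together with $c:=u\varrho$; both are positive and satisfy \eqref{eqjj} by construction. The main (and essentially only) subtlety is the integrability of $|L_1|^{1-\alpha}$, which is precisely where the standing hypothesis $\alpha>1/2$ is used---without it the right-hand side of \eqref{eqjj} would not even be well defined.
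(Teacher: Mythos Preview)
Your proof is correct and takes a genuinely different route from the paper's.

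The paper argues via continuity and the intermediate value theorem: it sets $g(c,\varrho):=c-\alpha\E[\operatorname{sgn}(c+\varrho L_1)|c+\varrho L_1|^{1-\alpha}]$, checks (by dominated convergence) that $g$ is jointly continuous on $(0,\infty)\times[0,\infty)$, and computes $\lim_{\varrho\downarrow 0}g(c,\varrho)=c^{1-\alpha}(c^\alpha-\alpha)$. Choosing $c_1^\alpha<\alpha<c_2^\alpha$ gives opposite signs of this limit, so for some small fixed $\varrho>0$ one has $g(c_1,\varrho)<0<g(c_2,\varrho)$, and the intermediate value theorem in $c$ yields a zero.

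Your scaling substitution $c=u\varrho$ collapses the two-parameter problem to the single equation $u\varrho^\alpha=\alpha f(u)$ with $f(u)=\E[\operatorname{sgn}(u+L_1)|u+L_1|^{1-\alpha}]$, and then the symmetrisation together with strict monotonicity of $x\mapsto\operatorname{sgn}(x)|x|^{1-\alpha}$ gives $f(u)>0$ for every $u>0$. This is more constructive---you get the explicit formula $\varrho=(\alpha f(u)/u)^{1/\alpha}$, $c=u\varrho$, and in fact a whole one-parameter family of solutions---whereas the paper's IVT argument is non-constructive but does not require spotting the scaling structure. Both proofs use $\alpha>1/2$ in exactly the same place, namely to ensure $1-\alpha<\alpha$ so that $\E|L_1|^{1-\alpha}<\infty$.
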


\begin{proof}
    Let
    $$
        g(c,\varrho):=c-\alpha\E\left[\operatorname{sgn}(c+\varrho L_1)|c+\varrho L_1|^{1-\alpha}\right],
        \quad c>0,\varrho\geq0.
    $$
    It follows from the dominated convergence theorem that for $c>0$ and $\varrho\geq0$,
    \begin{align*}
        &\lim_{\epsilon_1\rightarrow0,\epsilon_2\rightarrow0}g(c+\epsilon_1,\varrho+\epsilon_2)\\
        &\qquad=c-\alpha\lim_{\epsilon_1\rightarrow0,\epsilon_2\rightarrow0}\E\left[
        \operatorname{sgn}(c+\varrho L_1+\epsilon_1+\epsilon_2L_1)\I_{\{c+\varrho L_1\neq0\}}
        |c+\varrho L_1+\epsilon_1+\epsilon_2L_1|^{1-\alpha}
        \right]\\
        &\qquad=c-\alpha\E\left[
        \operatorname{sgn}(c+\varrho L_1)\I_{\{c+\varrho L_1\neq0\}}
        |c+\varrho L_1|^{1-\alpha}
        \right]\\
        &\qquad=g(c,\varrho),
    \end{align*}
    which means that $g(c,\varrho)$ is continuous for $c>0$ and $\varrho\geq0$. Using the dominated convergence theorem again,
    we know that for $c>0$,
    $$
        \lim_{\varrho\downarrow0}g(c,\varrho)=c-\alpha c^{1-\alpha}=c^{1-\alpha}(c^\alpha-\alpha).
    $$
    Pick $0<c_1<c_2$ such that $c_1^\alpha<\alpha$ and $c_2^\alpha>\alpha$. Then we get
    \begin{align*}
        &\lim_{\varrho\downarrow0}g(c_1,\varrho)=c_1^{1-\alpha}(c_1^\alpha-\alpha)<0,\\
        &\lim_{\varrho\downarrow0}g(c_2,\varrho)=c_2^{1-\alpha}(c_2^\alpha-\alpha)>0.
    \end{align*}
    Now we conclude that there exist $c\in(c_1,c_2)$ and $\varrho>0$ such that
    $g(c,\varrho)=0$, and this completes the proof.
\end{proof}

\begin{exa}\label{conter}
    Let $d=1$ and $L_t$ be a symetric $\alpha$-stable process on $\R$ with
    $\E\,\e^{\i\xi L_t}=\e^{-t|\xi|^\alpha}$ \textup{(}$1/2<\alpha<1$\textup{)}. Let
    $$
        b(\gamma):=\int_\R\operatorname{sgn}(x)|x|^{1-\alpha}\,\gamma(\d x),\quad \gamma\in\scr P.
    $$
    It is easy to see that
    $$
        |b(\gamma)-b(\tilde{\gamma})|\leq2^\alpha\W_{1-\alpha}(\gamma,\tilde{\gamma})
        \leq2\W_{1-\alpha}(\gamma,\tilde{\gamma}),\quad \gamma,\tilde{\gamma}\in\scr P_{1-\alpha}.
    $$
    By Lemma \ref{ss21dd}, we can pick two constants $c>0$ and $\varrho>0$ such that \eqref{eqjj} holds.
    Consider the McKean-Vlasov SDE on $\R$:
    \begin{equation}\label{exasde}
        \d X_t=b(\L_{X_t})\,\d t+\varrho\,\d L_t.
    \end{equation}
    Since
    $$
        b(\L_{L_t})=\E\left[
        \operatorname{sgn}(L_t)|L_t|^{1-\alpha}
        \right]=0,
    $$
    we know that $X_t=L_t$ is a solution to \eqref{exasde} with $X_0=0$. Next, we will show that $X_t=ct^{1/\alpha}+\varrho L_t$ also
    solves \eqref{exasde}. To this aim, we use the scaling property of $L_t$ to get that for all $s\in(0,T]$,
    \begin{align*}
        b(\L_{cs^{1/\alpha}+\varrho L_s})
        &=\E\left[
        \operatorname{sgn}(cs^{1/\alpha}+\varrho s^{1/\alpha}L_1)|cs^{1/\alpha}+\varrho s^{1/\alpha}L_1|^{1-\alpha}
        \right]\\
        &=s^{\frac1\alpha-1}\E\left[
        \operatorname{sgn}(c+\varrho L_1)|c+\varrho L_1|^{1-\alpha}
        \right],
    \end{align*}
    which, together with \eqref{eqjj}, implies that
    \begin{align*}
        \int_0^tb(\L_{cs^{1/\alpha}+\varrho L_s})\,\d s
        &=\int_0^ts^{\frac1\alpha-1}\,\d s\times
        \E\left[
        \operatorname{sgn}(c+\varrho L_1)|c+\varrho L_1|^{1-\alpha}
        \right]\\
        &=\alpha t^{1/\alpha}\E\left[
        \operatorname{sgn}(c+\varrho L_1)|c+\varrho L_1|^{1-\alpha}
        \right]\\
        &=ct^{1/\alpha}.
    \end{align*}
    This means that $X_t=ct^{1/\alpha}+\varrho L_t$ is a solution to \eqref{exasde} with $X_0=0$. Thus, the SDE \eqref{exasde} with initial
    value $X_0=0$ has at least two strong solutions: $L_t$ and $ct^{1/\alpha}+\varrho L_t$, where $c>0$
    and $\varrho>0$ are two constants satisfying \eqref{eqjj}.
\end{exa}

\noindent
\textbf{Acknowledgement.} C.-S.\ Deng is supported by National Natural Science Foundation of China (12371149)
and Natural Science Foundation of Hubei Province of China (2022CFB129).
X.\ Huang is supported by National Key R\&D Program of China (No. 2022YFA1006000) and National Natural Science Foundation of China (12271398).

\end{document}